\documentclass[10pt]{article}
\usepackage{amsmath}
\usepackage{amsthm}
\usepackage{amsfonts}
\usepackage{mathrsfs}
\usepackage{amsfonts}
\usepackage{amssymb,amsfonts,amsmath, psfrag,eepic,colordvi,epsfig}
\usepackage{multirow} 
\usepackage{array} 

\usepackage{longtable}

\usepackage{cite}

\topskip 
\numberwithin{equation}{section}
\newtheorem{theorem}{Theorem}[section]

\newtheorem{definition}[theorem]{Definition}
\newtheorem{conjecture}[theorem]{Conjecture}

\newtheorem{lemma}[theorem]{Lemma}

\newcommand{\Rmnum}[1]{\uppercase\expandafter{\romannumeral #1}}

\begin{document}
	\parskip 7pt
	
	\pagenumbering{arabic}
	\def\sof{\hfill\rule{2mm}{2mm}}
	\def\ls{\leq}
	\def\gs{\geq}
	\def\SS{\mathcal S}
	\def\qq{{\bold q}}
	\def\MM{\mathcal M}
	\def\TT{\mathcal T}
	\def\EE{\mathcal E}
	\def\lsp{\mbox{lisp}}
	\def\rsp{\mbox{rasp}}
	\def\pf{\noindent {\it Proof.} }
	\def\mp{\mbox{pyramid}}
	\def\mb{\mbox{block}}
	\def\mc{\mbox{cross}}
	\def\qed{\hfill \rule{4pt}{7pt}}
	\def\pf{\noindent {\it Proof.} }
	\textheight=22cm

	\begin{center}
		{\Large\bf Proof  of  a conjecture
		 of Banerjee, Bringmann
		  and  Bachraoui
		   on infinite families of congruences }
	\end{center}

		\begin{center}

		Juejie Sun$^{1}$ and  Olivia X.M. Yao$^2$

			$^{1,2}$School of Mathematical Sciences, \\
			Suzhou University of Science and
			Technology, \\
			Suzhou,  215009, Jiangsu Province,
			P. R. China

			Email:   jeric$\_$020918@163.com,
			yaoxiangmei@163.com

		\end{center}

	\noindent {\bf Abstract.}
	Recently, Andrews and Bachraoui
	investigated congruences for 
	 certain restricted two-color partitions.
	  They made two conjectures for Ramanujan
	  type congruences and a vanishing identity for the limiting sequence.
	   Very recently,  Banerjee, Bringmann
	   and  Bachraoui confirmed  these three
	     conjectures  by relating the corresponding generating
	     function to modular forms and mock theta functions. 
	      At the end of their paper,  they  posed a conjecture
	       on infinite families of  congruences modulo 4 and 8 for  
	        the limiting sequence. The Banerjee-Bringmann-Bachraoui's
	          conjecture  implies the two  
	         conjectures given by Andrews and Bachraoui. In this note, we settle  Banerjee-Bringmann-Bachraoui's conjecture
	          on infinite famlies of congruences
	           based on   Banerjee-Bringmann-Bachraoui's results 
	            and an identity due to Waston.

	\noindent {\bf Keywords:}  partitions,  nonnegativity, 
	$q$-series.

	\noindent {\bf AMS Subject
		Classification:} 11P81, 05A17.

	\section{Introduction}
	\allowdisplaybreaks
	
	A partition of a postive integer $n$ is a weakly decreasing sequence of positive integers $\pi=(\pi_{1},\pi_{2},\ldots,\pi_{k})$ such that
	$\pi_{1}+\pi_{2}+\cdots+\pi_{k}=n$. The $\pi_i$ are called the parts of partition. Let $l(\pi)$ and $\#(\pi)$ 
	denote the largest part of $\pi$
	and the number of parts of $\pi$
	\cite{Andrews-1976}.   
	
	In recent years, 
	 integer partitions in which 
	  each part may occur in two  colors have been studied ectensively, see 
	   for example \cite{Andrews-Bachraoui-1,Andrews-Bachraoui,Banerjee,
	   	Banerjee-Bringmann-Bachraoui,Chern}.  In particular,
	    Andrews and Bachraoui \cite{Andrews-Bachraoui} invesigated sequences
	     of integer partitions in two colors
	      (blue and red), as in the following definition.
	      
	      \begin{definition}\label{D-1}
	      	Let $k$ be a fixed positive integer.
	      	 For a positive integer $n$,
	      	  let $c(k,n)$
	      	   counts the number of 
	      	    two-color partitons $\pi$
	      	     of $n$ in which
	     \begin{enumerate}
	     	
	     	\item the smallest part $s(\pi) $
	     	 is odd and occurs at least one in blue,
	     	 
	     	 \item every even blue part is at least $2k-1$
	     	  greater than $s(\pi)$,
	     	  
	     	  \item the even parts of the same color are distinct. 
	     	
	     	  \end{enumerate}
	     	  	      \end{definition}
	     	  
	     	Andrews and Bachraoui \cite{Andrews-Bachraoui}  
	     	established the generating function of $c(k,n)$
	     	\begin{align*}
	     		C_k(q):=\sum_{n=0}^\infty c(k,n)q^n
	     		=\sum_{n=0}^\infty
	     		\frac{(-q^{2n+2k},-q^{2n+2};q^2)_\infty
	     		 }{(q^{2n+1};q^2)_\infty^2} q^{2n+1}.
	     	\end{align*}
	     	Here and throughout, we adopt the following standard
	     	$q$-series notation:
	     	\begin{align*}
	     		(a;q)_\infty:=&\prod_{k=0}^\infty
	     		(1-aq^k),\qquad
	     		(a;q)_n:=\frac{(a;q)_\infty}
	     		{(aq^n;q)_\infty}
	     	\end{align*}
	     	and
	     	\[
	     	(a_1,a_2,\ldots, a_r;q)_\infty
	     	:=(a_1;q)_\infty(a_2;q)_\infty
	     	\cdots (a_r;q)_\infty.
	     	\]
	 Andrews and Bachraoui \cite{Andrews-Bachraoui}  proved a number 
	  of congruences modulo 4 on  $c(k,n)$
	   for  $k=1,2,3$. At the end of their paper,
	    they considered the congruence of the limits 
	    of $C_k(q)$. Let 
	    \begin{align*} 
	    C(q):=\lim_{k  \to  \infty}C_k(q)=\sum_{n=0}^\infty
	     \frac{(-q^{2n+2};q^2)_\infty
	     }{(q^{2n+1};q^2)_\infty^2} q^{2n+1}=:\sum_{n=0}^\infty
	      c(n) q^n.
	    \end{align*}
	   Andrews and Bachraoui \cite{Andrews-Bachraoui}  posed the following two conjectures.
	   
	\begin{conjecture} \label{C-1}
		For $n\geq 0$,
		 \begin{align}\label{1-2}
		 c(8n+4) \equiv 0\pmod 4.
		 \end{align}
	\end{conjecture}
	
		\begin{conjecture} \label{C-2}
		For $n\geq 0$,
		\begin{align}\label{1-3}
			c(8n+6) \equiv 0\pmod 8.
		\end{align}
	\end{conjecture}
	
Very recently, Banerjee,   Bringmann and    Bachraoui
 \cite{Banerjee-Bringmann-Bachraoui} proved Conjectures 
 \ref{C-1} and \ref{C-2}  by relating the corresponding generating
 function to modular forms and mock theta functions.
 They also showed  that for $n\geq 0$,
 \begin{align}\label{1-4}
 	c(16n+13) \equiv 0\pmod 4. 
 \end{align}
  
 At the end of their paper \cite{Banerjee-Bringmann-Bachraoui}, Banerjee,   Bringmann and    Bachraoui
  presented 
  the following two conjectures.
  
  \begin{conjecture} \label{C-3}
  	For $n\geq 0$,
 \begin{align}\label{1-5}
  c(32n+23) \equiv 0 \pmod 8.
  \end{align}
  \end{conjecture}
  
  \begin{conjecture} \label{C-4}
  	For all integers $n$ and $k$,
  	\begin{align}
  		c\left(2^{2k+3}n+\frac{11\cdot 4^{k}+1}{3}\right) &
  		 \equiv 0 \pmod 4, \label{1-6} \\
  		 	c\left(2^{2k+3}n+\frac{17\cdot 4^{k}+1}{3}\right) &
  		 \equiv 0 \pmod 8,  \label{1-7} \\
  		 	c\left(2^{2k+4}n+\frac{38 \cdot 4^{k}+1}{3}\right) &
  		 \equiv 0 \pmod 4. \label{1-8}
  	\end{align}

  \end{conjecture}
  
  Setting $k=0$ in \eqref{1-6}--\eqref{1-8}, we arrive at \eqref{1-2}--\eqref{1-4}, respectively.
   Taking $k=1$ in \eqref{1-7}, we get \eqref{1-5}. Therefore, Conjecture  \ref{C-4} implies 
    Conjectures \ref{C-1}--\ref{C-3}.

The aim of this paper is to confirm Conjecture
\ref{C-4}  by utilizing    Banerjee-Bringmann-Bachraoui's results 
  \cite{Banerjee-Bringmann-Bachraoui} and an identity due to Waston \cite{Watson}.

\section{Proof of Conjecture \ref{C-4} }

To prove Conjecture \ref{C-4}, we first 
 prove two lemmas. 

\begin{lemma}\label{L-1}
	For all nonnegative integers $n$ and $k$, 
\begin{align}
c\left(2^{2k+2}n+\frac{2^{2k+3}+1}{3} \right)  \equiv (-1)^k c(4n+3) \pmod 8. \label{2-1}
\end{align}
\end{lemma}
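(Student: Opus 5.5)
The key observation is that the arguments in \eqref{2-1} satisfy a simple recursion, so the lemma reduces to one congruence and an induction on $k$. Put
\[
m_k(n):=2^{2k+2}n+\frac{2^{2k+3}+1}{3}.
\]
A direct computation gives $m_{k+1}(n)=4m_k(n)-1$. Also $m_0(n)=4n+3$, and $m_k(n)\equiv 3\pmod 4$ for every $k$, since $(2^{2k+3}+1)/3\equiv 3\pmod 4$; for example, it equals $3$ for $k=0$ and $11$ for $k=1$. So writing $m_k(n)=4N+3$ with $N=2^{2k}n+\big((2^{2k+3}+1)/3-3\big)/4$, we get $m_{k+1}(n)=16N+11$. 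The case $k=0$ of \eqref{2-1} is trivial. The inductive step follows from the single congruence
\begin{align}
c(16N+11)\equiv -\,c(4N+3)\pmod 8 \qquad (N\ge 0), \label{plan-key}
\end{align}
because then $c(m_{k+1}(n))\equiv -c(m_k(n))\equiv (-1)^{k+1}c(4n+3)\pmod 8$.

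To prove \eqref{plan-key}, I would start from the results of Banerjee, Bringmann and Bachraoui \cite{Banerjee-Bringmann-Bachraoui}. They express $C(q)$, and in particular the $4$-dissection component $\sum_{n\ge0}c(4n+3)q^n$, in terms of modular forms (theta/eta quotients) and a mock theta function. The first step is to reduce that expression modulo $8$ to an explicit $q$-series
\[
A(q)\equiv\sum_{n\ge0}c(4n+3)q^n \pmod 8,
\]
using elementary congruences such as $(q;q)_\infty^{8}\equiv (q^2;q^2)_\infty^{4}\pmod 8$ to simplify the modular part. The second step is to $4$-dissect $A(q)$ and extract the terms $q^{4N+2}$, because $16N+11=4(4N+2)+3$. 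The goal is to show that this component equals $-A(q^4)$ modulo $8$, with the factor $q^2$ removed. For the theta-function part I would use the standard $2$-dissections of $\varphi(q)$, $\psi(q)$ and $1/\varphi(-q)$, applied twice.

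The main obstacle is the mock theta (or Appell–Lerch) part, since it has no product form to dissect. This is where Watson's identity \cite{Watson} enters. I would first try the identity relating the third-order mock theta functions $f(q)$ and $\omega(q)$ (equivalently, Watson's expression of $\omega$ and $\nu$ through Lambert-type sums). It writes the relevant mock theta function, up to theta quotients, as a series $\sum_n (-1)^n q^{Q(n)}/(1\pm q^{L(n)})$. In that form the $q^{4N+2}$ component can be read off by restricting the summation index modulo $2$ or $4$. I expect the restricted series to reproduce the original one with $q\mapsto q^4$, with sign $-1$ and with error terms that are multiples of $8$. The sign will come from the factor $(-1)^n$ under the restriction. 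Checking that all leftover cross terms vanish modulo $8$, rather than only modulo $4$, is the delicate point. I would verify the resulting identity numerically on the first few hundred coefficients before writing it out.
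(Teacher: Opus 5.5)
\textbf{Verdict: genuine gap.} Your inductive reduction is correct and is exactly how the paper concludes. The recursion $m_{k+1}=4m_k-1$ turns the lemma into the single congruence $c(16N+11)\equiv -c(4N+3)\pmod 8$, which is the paper's \eqref{2-19}.

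That congruence, however, is the whole content of the lemma, and your proposal does not prove it. Both of your steps are only announced.

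\textbf{Step one.} You never determine $A(q)$. You also describe the Banerjee--Bringmann--Bachraoui input as ``modular forms plus a mock theta function''. In fact \eqref{2-2} contains two mock objects: $\omega(-q)$ and McIntosh's $B(-q)$, the latter multiplied by $2qf_2f_4/f_1^2$. The $B$-part is not handled by theta-function dissections and congruences like $f_1^8\equiv f_2^4$. The paper needs Mao's identity \eqref{2-10} for $\sum a_B(2n)q^n$ and Wang's parity result \eqref{2-11} for $a_B(2n+1)$. Only with these does it reach
\[
\sum_{n\ge0}c(4n+3)q^n\equiv\omega(-q)-4f_4^4 \pmod 8,
\]
which is \eqref{2-16}.

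\textbf{Step two.} The sentence ``I expect the restricted series to reproduce the original one with sign $-1$ and errors divisible by $8$'' is precisely the statement to be proved. You yourself leave open whether the error terms are controlled modulo $8$ rather than only modulo $4$.

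\textbf{How the route can be completed.} It can be finished, and more simply than you envisage. The paper gets the self-similarity by substituting $q\omega(-q)=2q\frac{f_2f_4}{f_1^2}B(-q)-C(q)$, from \eqref{2-17}, back into \eqref{2-16}. This makes $C(q)$ itself reappear, and the paper then dissects once more using \eqref{2-6}, \eqref{2-10} and \eqref{2-11}.

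Along your line, no Lambert-series dissection is needed, because Watson's identity \eqref{2-4} already contains the term $2q^3\omega(-q^4)$. Write $[q^m]F$ for the coefficient of $q^m$ in $F$. Extract the exponents $\equiv 3\pmod 4$ in \eqref{2-4}, and evaluate the theta quotient with \eqref{2-7} and \eqref{2-14}. This gives the exact identity
\[
[q^{4N+2}]\,\omega(-q)=-[q^N]\,\omega(-q)+4[q^N]\frac{f_4^6}{f_1^2f_2^3}.
\]
Two facts finish the argument: $4f_4^6/(f_1^2f_2^3)\equiv 4f_4^4\pmod 8$, and $f_4^4$ has no $q^{4N+2}$ terms. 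Together with \eqref{2-16} these yield $c(16N+11)\equiv -c(4N+3)\pmod 8$.

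As written, your proposal supplies neither the explicit $A(q)$ (and hence the needed facts about $B$) nor this extraction. It is therefore a plan rather than a proof of the lemma.
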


\noindent{\it Proof.} In 
 \cite{Banerjee-Bringmann-Bachraoui}, Banerjee, Bringmann and Bachraoui proved that 
 \begin{align} \label{2-2}
 	C(q)=2q\frac{f_2f_4
 	 }{f_1^2}B(-q)-q\omega(-q).
 \end{align}
 Here and throughout, 
 for any psotive integer $k$, 
 \[
 f_m:=(q^k;q^k)_\infty. 
 \]
  The third order mock theta function 
  $
  \omega(q)$  is   defined by 
   \[
   \omega(q):=\sum_{n=0}^\infty
    \frac{q^{2n(n+1)}}{(q;q^2)_{n+1}^2}
   \]
   and 
    McIntosh's  second order mock theta 
   function $B(q)$ \cite{McIntosh} is defined by
   \begin{align} \label{2-3}
   	B(q):=\sum_{n=0}^\infty a_B(n)q^n=
   	\frac{(-q^2;q^2)_\infty 
   	}{(q^2;q^2)_\infty }
   	\sum_{n=-\infty}^\infty
   	\frac{(-1)^n q^{2n(n+1)}}{1-q^{2n+1}}
   	=\sum_{n=0}^\infty\frac{(-q^2;q^2)_n}{(q;q^2)_{n+1}^2} q^{n(n+1)}.
   \end{align}
   In \cite{Watson}, Watson proved that 
\begin{align} \label{2-4}
   f(q^8)-2q\omega(-q)-2q^3\omega(-q^4)=\frac{f_1^2f_4^8}{f_2^5f_8^4
   }.
   \end{align}
   Combining \eqref{2-2}--\eqref{2-4} yields 
   \begin{align}
   	\sum_{n=0}^\infty c(n)q^n
   	&=2q\frac{f_2f_4}{f_1^2}
   	\sum_{n=0}^\infty (-1)^n a_B(n) q^n
   	+q^3\omega(-q^4)+\frac{f_1^2f_4^8}{2f_2^5f_8^4
   	}-\frac{f(q^8)}{2}
 . \label{2-5}
   \end{align}
It follows from \cite[Entry 25, (i) and (ii), p.40]{Berndt} that 
\begin{align}\label{2-6}
\frac{1}{f_1^2}=\frac{f_8^5}{f_2^5f_{16}^2}
+2q\frac{f_4^2f_{16}^2}{f_2^5f_8}
\end{align}
and 
\begin{align}\label{2-7}
f_1^2=\frac{f_2f_8^5}{f_4^2f_{16}^2}
-2q\frac{f_2f_{16}^2}{f_8}.
\end{align}
Substituting \eqref{2-6}
and \eqref{2-7} into \eqref{2-5}, we obtain  
\begin{align}\label{2-8}
	\sum_{n=0}^\infty c(n)q^n
	  &=2qf_2f_4\left(
	  \frac{f_8^5}{f_2^5f_{16}^2}+2q\frac{f_4^2f_{16}^2
	   }{f_2^5f_8}\right)\left(\sum_{n=0}^\infty a_B(2n)q^{2n}
	 -\sum_{n=0}^\infty a_B(2n+1)q^{2n+1} 
	   \right)
	   \nonumber\\
	   &\qquad +q^3\omega(-q^4)-\frac{f(q^8)}{2}
	   +\frac{f_4^8}{2f_2^5f_8^4}
	   \left(\frac{f_2f_8^5}{f_4^2f_{16}^2}
	   -2q\frac{f_2f_{16}^2}{f_8}\right).
\end{align}
 Picking out those terms in which the
power of $q$
is congruent to 1 modulo 2 in \eqref{2-8}, after dividing
by $q $ and
replacing $q^2$ by $q$, we arrive at 
	\begin{align}\label{2-9}
		\sum_{n=0}^\infty
		 c(2n+1)q^n =2\frac{f_2f_4^5}{
		  f_1^4f_8^2}\sum_{n=0}^\infty a_B(2n)q^n 
		  -4q\frac{f_2^3f_8^2}{f_1^4f_4}
		  \sum_{n=0}^\infty a_B(2n+1)q^n +q\omega(-q^2)
		  -\frac{f_2^8f_8^2}{f_1^4f_4^5}.
	\end{align}
Mao \cite{Mao} proved that 
	\begin{align}\label{2-10}
\sum_{n=0}^\infty a_B(2n)q^n=\frac{f_2^5}{f_1^4}.
\end{align}
Wang \cite{Wang} proved that 
\[
\sum_{n=0}^\infty a_B(n)q^n\equiv q^{2n^2+2n} \pmod 2,
\]
which implies that 
	\begin{align}\label{2-11}
	\sum_{n=0}^\infty a_B(2n+1)q^n\equiv 0 \pmod 2. 
	\end{align}
	Combining \eqref{2-9}--\eqref{2-11} yields 
	\begin{align}
	\sum_{n=0}^\infty
	c(2n+1)q^n & \equiv 2\frac{f_2^6f_4^5}{
		f_1^8f_8^2}  +q\omega(-q^2)
	-\frac{f_2^8f_8^2}{f_1^4f_4^5}
	\nonumber\\
	&\equiv  2\frac{f_2^2f_4^5}{
		 f_8^2}  +q\omega(-q^2)
	-\frac{f_2^8f_8^2}{f_1^4f_4^5} \pmod 8.
	\label{2-12}
\end{align}	
Here we have used the fact that for all positive integers $m$ and $k$,
\begin{align}\label{2-13}
f_k^{2^m} \equiv f_{2k}^{2^{m-1}} \pmod {2^m}.
\end{align}
It follows from \cite[Entry 25, (v) and (vi), p.40]{Berndt} that 
\begin{align}\label{2-14}
\frac{1}{f_1^4}=\frac{f_4^{14}}{f_2^{14}f_8^4}
+4q\frac{f_4^2f_8^4}{f_2^{10}}.
\end{align}
Substituting \eqref{2-14} into \eqref{2-12} and then multiplying 
 both sides by $q$, we
 have 
\begin{align}\label{2-15}
	\sum_{n=0}^\infty
	c(2n+1)q^{n+1} 
	&\equiv  2q\frac{f_2^2f_4^5}{
		f_8^2}  +q^2\omega(-q^2)
	-q\frac{f_2^8f_8^2}{ f_4^5}\left(
	\frac{f_4^{14}}{f_2^{14}f_8^4}+4q\frac{f_4^2f_8^4}{
	f_2^{10}}\right) \pmod 8.
\end{align}	
 Picking out those terms in which the
power of $q$
is congruent to 0  modulo 2 in \eqref{2-15}, 
 and then replacing $q^2$ by $q$,  we obtain 
\begin{align}\label{2-16}
	\sum_{n=0}^\infty
	c(4n+3)q^{n+1} 
	&\equiv  q\omega(-q)
	-4q\frac{f_4^6}{ f_1^2f_2^3} \nonumber\\
	&  \equiv  q\omega(-q)
	-4qf_4^4 \pmod 8. \qquad ({\rm by}\ \eqref{2-13})
\end{align}	
It follows from \eqref{2-2} that 
\begin{align}\label{2-17}
q\omega(-q)=2q\frac{f_2f_4}{f_1^2}\sum_{n=0}^\infty
 (-1)^n a_B(n)q^n -\sum_{n=0}^\infty c(n)q^n .
\end{align}
 Substituting \eqref{2-17}
  into \eqref{2-16}, we arrive at 
\begin{align}\label{2-18}
	\sum_{n=0}^\infty
	c(4n+3)q^{n+1} 
	&\equiv 2q\frac{f_2f_4}{f_1^2}\sum_{n=0}^\infty
	(-1)^n a_B(n)q^n -\sum_{n=0}^\infty c(n)q^n 
	-4q f_4^4 \nonumber
	\\
	&\equiv 2qf_2f_4\left(
	\frac{f_8^5}{f_2^5f_{16}^2}+2q\frac{f_4^2f_{16}^2
	}{f_2^5f_8}\right)\left(\sum_{n=0}^\infty a_B(2n)q^{2n}
	-\sum_{n=0}^\infty a_B(2n+1)q^{2n+1} 
	\right)\nonumber\\
	&\qquad  -\sum_{n=0}^\infty c(n)q^n 
	-4q f_4^4 
	 \pmod 8. \qquad ({\rm by}\ \eqref{2-6})
\end{align}	
Extracting those
terms in which the power of  $q $ is congruent to 1 modulo 2
 in \eqref{2-18}, then dividing by $q$ and 
replacing $q^{2}$ by $q$, we obtain
	\begin{align}
	\sum_{n=0}^\infty
	c(8n+3)q^{n} 
	&\equiv 2\frac{f_2f_4^5}{f_1^4f_8^2}\sum_{n=0}^\infty
  a_B(2n)q^n -4q\frac{f_2^3f_8^2}{
	f_1^4f_4}\sum_{n=0}^\infty
	(-1)^n a_B(2n+1)q^n
	\nonumber\\
	&\qquad -\sum_{n=0}^\infty c(2n+1)q^n 
	-4f_2^4  \nonumber\\
		&\equiv 2\frac{f_2^6f_4^5}{f_1^8f_8^2} 
 -\sum_{n=0}^\infty c(2n+1)q^n -4f_2^4
 \qquad ({\rm by}\ \eqref{2-10}\ {\rm and }\ \eqref{2-11})	\nonumber\\
 &\equiv 6\frac{f_2^2f_4^5}{ f_8^2} 
 -\sum_{n=0}^\infty c(2n+1)q^n 
   \pmod 8. \qquad ({\rm by}\ \eqref{2-13}) \label{2-18-1}
\end{align}	
It follows from \eqref{2-18-1}
 that  for $n\geq 0$,
\begin{align}\label{2-19}
	c(16n+11) \equiv -c(4n+3) \pmod 8. 
\end{align}
By \eqref{2-19} and mathematial induction, we obtain \eqref{2-1}.
\qed  

\begin{lemma}
	For $n\geq 0$,
	\begin{align}
		c(32n+15)&\equiv 0 \pmod 4,
		\label{2-21}\\
		c(32n+23)&\equiv 0 \pmod 8, \label{2-22}\\
		c(64n+51)&\equiv 0 \pmod 4. \label{2-23}
	\end{align}
\end{lemma}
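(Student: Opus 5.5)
The plan is to derive all three congruences from one relation: express $c(4m+3)$ through $c(m+1)$ and a coefficient of an explicit eta-quotient times $B(-q)$. Then reduce each case to congruences already available, namely $c(8n+4)\equiv 0 \pmod 4$ and $c(8n+6)\equiv 0\pmod 8$ (Conjectures \ref{C-1}, \ref{C-2}, proved in \cite{Banerjee-Bringmann-Bachraoui}), \eqref{1-4}, and \eqref{2-19}.

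\textbf{The basic relation.} Substituting \eqref{2-17} into \eqref{2-16} (this is the first line of \eqref{2-18}) and comparing coefficients of $q^{m+1}$ gives, for all $m\ge 0$,
\begin{align*}
c(4m+3)\equiv 2[q^{m}]\,F(q)-c(m+1)-4[q^{m}]f_4^4 \pmod 8, \qquad F(q):=\frac{f_2f_4}{f_1^2}\sum_{n=0}^\infty(-1)^na_B(n)q^n .
\end{align*}
The odd part of $F$ is already computed in the derivation of \eqref{2-18-1}. Write $F_{\rm odd}(q)=qG(q^2)$. Then, using \eqref{2-10}, \eqref{2-11} and \eqref{2-13}, we get $2G\equiv 2f_2^2f_4^5/f_8^2 \pmod 8$. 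The right-hand side is a series in $q^2$, so $2[q^{m}]F \pmod 8$ is explicit for odd $m$. For even $m$ we only need $F \bmod 2$. Since $f_2f_4/f_1^2\equiv f_4$ and, by Wang's result, $B(-q)\equiv\sum q^{2k^2+2k}\pmod 2$, the series $F \bmod 2$ is supported on even exponents.

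\textbf{\eqref{2-21}.} Take $m=8n+3$, so $4m+3=32n+15$ and $m+1=8n+4$. Here $[q^{8n+3}]F$ is even by the parity remark, the term with $f_4^4$ is $\equiv 0\pmod 4$, and $c(8n+4)\equiv 0\pmod 4$. Hence $c(32n+15)\equiv 0\pmod 4$.

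\textbf{\eqref{2-23}.} Write $64n+51=16(4n+3)+3$. Then $m=16n+12$ and $m+1=16n+13$, so \eqref{1-4} gives $c(16n+13)\equiv 0\pmod 4$. The coefficient $[q^{16n+12}]F$ has even index, so I need it even, i.e.\ I need to show $[q^{16n+12}]F\not\equiv 1\pmod 2$. Modulo $2$ we have $F\equiv f_4\sum_k q^{2k^2+2k}$, which is a power series in $q^4$. So I must check that the coefficient of $q^{4n+3}$ in $f_1\sum_k q^{(k^2+k)/2}=f_1\cdot f_2^2/f_1\equiv f_2^2\equiv f_4 \pmod 2$ vanishes. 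It does, because $4n+3$ is odd. Alternatively, \eqref{2-19} gives directly $c(64n+51)=c(16(4n+3)+3)$, and one can instead use the even part of \eqref{2-18-1}.

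\textbf{\eqref{2-22}, the main obstacle.} Take $m=8n+5$, so $4m+3=32n+23$ and $m+1=8n+6$, and $c(8n+6)\equiv 0 \pmod 8$. The $f_4^4$ term vanishes since $8n+5$ is odd. It remains to show $2[q^{4n+2}]G\equiv 0\pmod 8$, i.e.\ that the coefficient of $q^{2n+1}$ in $f_1^2f_2^5/f_4^2$ is $\equiv 0\pmod 4$ after the correct mod-$8$ bookkeeping.

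A first naive reduction using \eqref{2-7} gives $-2q f_2f_{16}^2/f_8$ for the odd part of $f_1^2$. This makes the needed coefficient look only even, not divisible by $4$. So the place where care is needed is exactly this step, and I expect it to be the hardest part. I would redo the computation of $G$ modulo $4$ without discarding terms. In particular, I would keep the term $-2q\,f_2^3f_8^2/(f_1^4f_4)\sum(-1)^na_B(2n+1)q^n$ and determine $\sum a_B(2n+1)q^n$ modulo $4$, not just modulo $2$. For that I would use the Lambert-series form \eqref{2-3} of $B(q)$, or a known $2$-dissection of $B$ (Mao's companion identity to \eqref{2-10}). I would then $2$-dissect again with \eqref{2-6}, \eqref{2-7} and \eqref{2-14} until the coefficients of $q^{8n+5}$ cancel modulo $8$. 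If that bookkeeping becomes unwieldy, the fallback is to use Watson's identity \eqref{2-4} to express $q\omega(-q)$ in \eqref{2-16} by eta-quotients, and extract the progression $q^{8n+6}$ directly.
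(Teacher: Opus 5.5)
Your arguments for \eqref{2-21} and \eqref{2-23} are correct. The first is the paper's argument: the odd-$m$ case of your relation is $c(8n+7)\equiv -c(2n+2)\pmod 4$. For the second, Wang's congruence and $\sum_{k\ge 0}q^{k(k+1)/2}=f_2^2/f_1$ give $F\equiv f_{16}\pmod 2$. This replaces the paper's use of the even part of \eqref{2-18-1}. Your aside invoking \eqref{2-19} is wrong, since $64n+51\equiv 3\pmod{16}$, not $11$.

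\textbf{The gap is in \eqref{2-22}, and it comes from a mislabeling.} Equation \eqref{2-18-1} extracts the odd powers of $2qF(q)$, i.e.\ $2q$ times the \emph{even} part of $F$. So $2f_2^2f_4^5/f_8^2$ describes $2[q^{2j}]F \bmod 8$, not $2[q^{2j+1}]F$. Your own parity remark already contradicts your formula for $2G$: the formula forces $G(0)=[q^1]F$ to be odd, whereas in fact $[q^1]F=0$. Consequently the target you set, $[q^{2n+1}]\,f_1^2f_2^5/f_4^2\equiv 0\pmod 4$, is false; the coefficient is $-2$ at $n=0$. No extra bookkeeping inside that computation can rescue it, because there the $a_B(2n+1)$-term carries a factor $4q$ and genuinely vanishes modulo $8$.

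\textbf{What is actually missing.} The true odd part is $F_{\mathrm{odd}}(q)=qG(q^2)$ with $G=2\frac{f_2^3f_8^2}{f_1^4f_4}\sum a_B(2n)q^n-\frac{f_2f_4^5}{f_1^4f_8^2}\sum a_B(2n+1)q^n$. By \eqref{2-10} and \eqref{2-13}, $2G\equiv 4f_4f_8^2-2\frac{f_4}{f_2}\sum a_B(2n+1)q^n\pmod 8$. To get $2[q^{4n+2}]G\equiv 0\pmod 8$ you therefore need $a_B(4n+1)\bmod 4$, which Wang's parity result cannot supply. The paper obtains it from Chan--Mao's identity \eqref{a-2}. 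After $q^2\to q$, the even part of $2G$ becomes $4f_2f_4^2-4f_2^9/f_1^8\equiv 0\pmod 8$. This gives $c(16n+7)\equiv -c(4n+2)\pmod 8$, and then \eqref{2-22} follows from \eqref{1-3}.

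\textbf{Your Watson fallback would also work, if carried out.} On exponents $8n+6$, the terms $f(q^8)$ and $q^3\omega(-q^4)$ in \eqref{2-4} contribute nothing. Hence $2[q^{8n+6}]\,q\omega(-q)=-[q^{8n+6}]\,f_1^2f_4^8/(f_2^5f_8^4)$. One dissection by \eqref{2-7} followed by two by \eqref{2-14} shows this coefficient is divisible by $16$. Then \eqref{2-16} gives $c(32n+23)\equiv 0\pmod 8$ directly, without \eqref{a-2} or \eqref{1-3}.

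As submitted, however, neither route is executed, so \eqref{2-22} is not proved.
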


\noindent{\it Proof.}
Extracting those
terms in which the power of  $q $ is congruent to 0 modulo 2
in \eqref{2-18}, then  
replacing $q^{2}$ by $q$, we obtain
\begin{align}\label{a-1}
	\sum_{n=0}^\infty c(8n+7)q^{n+1}
	&\equiv 4q\frac{f_2^3f_8^2}{f_1^4f_4}
	\sum_{n=0}^\infty a_B(2n)q^n-2q\frac{f_2f_4^5}{f_1^4f_8^2}
	\sum_{n=0}^\infty a_B(2n+1)q^n-\sum_{n=0}^\infty c(2n)q^n 
	\nonumber\\
		&\equiv 4q f_4f_8^2 
	-2q\frac{f_4}{f_2}
	\sum_{n=0}^\infty a_B(2n+1)q^n\nonumber\\
	&\quad -\sum_{n=0}^\infty c(2n)q^n 
	\pmod 8,  \qquad ({\rm by}\ \eqref{2-10}\ {\rm and }\ \eqref{2-13})
\end{align}
from which with \eqref{2-11}, we get 
\begin{align}
	\sum_{n=0}^\infty c(8n+7)q^{n+1}
	& \equiv -\sum_{n=0}^\infty c(2n)q^n 
	\pmod 4. \label{2-24}
\end{align}
It follows from \eqref{2-24}
 that for $n\geq 0$
\begin{align}\label{2-25}
c(8n+7) \equiv - c(2n+2) \pmod 4. 
\end{align}
Replacing $n$ by $4n+1$ in \eqref{2-25} and using \eqref{1-2}, we arrive at \eqref{2-21}. 

In \cite{Chan-Mao},
  Chan and Mao proved that 
 \begin{align}
 		\sum_{n=0}^\infty a_B(4n+1)q^n=2\frac{f_2^8}{f_1^7}.
 		 \label{a-2}
 \end{align}
Extracting those
terms in which the power of  $q $ is congruent to 1 modulo 2
in \eqref{a-1}, then  divicing by $q$ and 
replacing $q^{2}$ by $q$, we obtain
\begin{align*}
	\sum_{n=0}^\infty c(16n+7)q^{n}
	&\equiv 4f_2f_4^2 
	-2\frac{f_2}{f_1}
	\sum_{n=0}^\infty a_B(4n+1)q^n-\sum_{n=0}^\infty c(4n+2)q^n 
	\nonumber\\
		&\equiv 4f_2f_4^2 
	-4\frac{f_2^9}{f_1^8}
 -\sum_{n=0}^\infty c(4n+2)q^n  \qquad ({\rm by}\ \eqref{a-2})
 	\nonumber\\
 &\equiv  
 -\sum_{n=0}^\infty c(4n+2)q^n 
	\pmod 8, \qquad ({\rm by}\ \eqref{2-13})
\end{align*}
which implies that for $n\geq 0$, 
\begin{align}\label{2-26}
c(16n+7) \equiv -c(4n+2) \pmod 8.
\end{align}
Replacing $n$ by $2n+1$ in \eqref{2-26} and using \eqref{1-3}, we obtain  \eqref{2-22}. 

Picking out  those
terms in which the power of  $q $ is congruent to 0 modulo 2
in \eqref{2-18-1}, then   
replacing $q^{2}$ by $q$, we get  
\begin{align*}
	\sum_{n=0}^\infty
	c(16n+3)q^{n} 
	&\equiv 6\frac{f_1^2f_2^5}{ f_4^2} 
	-\sum_{n=0}^\infty c(4n+1)q^n 
	\nonumber\\
	&\equiv 2 f_4
	-\sum_{n=0}^\infty c(4n+1)q^n 
	\pmod 4, \qquad ({\rm by}\ \eqref{2-13})
\end{align*}
from which, we deduce that 
 for $n\geq 0$, 	
\begin{align}
 	c(32n+19) \equiv 
	-  c(8n+5) 
	\pmod 4. \label{2-27}
\end{align}	
Replacing $n$ by $2n+1$
 in \eqref{2-27}
 and using \eqref{1-4}, we get \eqref{2-23}.  \qed 
  
  Now, we turn to prove Conjecture \ref{C-4}.
  
 Replacing $n$ by $8n+3$ in \eqref{2-1}
  and employing \eqref{2-21}, we see that 
   for $n,k\geq 0$,
   \begin{align*}
   	c\left(2^{2k+5}n+\frac{11\cdot 4^{k+1}+1}{3} \right)  \equiv  0 \pmod 4. 
   \end{align*} 
   The above congrence
    implies that \eqref{1-6} is true when $k\geq 1$. 
   Congruence  \eqref{1-2}
    implies  \eqref{1-6} holds when $k=0$. Therefore, 
     Congruence  \eqref{1-6} is true for $k\geq 0$.  
    
     Replacing $n$ by $8n+5$ in \eqref{2-1}
    and employing \eqref{2-22}, we see that 
    for $n,k\geq 0$,
    \begin{align*}
    	c\left(2^{2k+5}n+\frac{17\cdot 4^{k+1}+1}{3} \right)  \equiv  0 \pmod 8,
    \end{align*}
    from which with \eqref{1-3}, we arrive at \eqref{1-7}.
    
     Replacing $n$ by $16n+12$ in \eqref{2-1}
    and employing \eqref{2-23}, we see that 
    for $n,k\geq 0$,
    \begin{align*}
    	c\left(2^{2k+6}n+\frac{38\cdot 4^{k+1}+1}{3} \right)  \equiv  0 \pmod 4.
    \end{align*}
   Congrence \eqref{1-8}
    follows  from the above congruence and \eqref{1-4}. 
     This completes the proof of Conjecture \ref{C-4}. \qed
     
      \section{Conclusions}
     
     As seen in Introduction, integer partitions
     in which each part may occur in two colors  
     have received
     a lot of attention in recent years.  In this note, we prove
      a conjecture   
      posed by  Banerjee, Bringmann
      and  Bachraoui \cite{Banerjee-Bringmann-Bachraoui}
      on infinite families of congruences
      for the coefficients of $C(q)$. Banerjee-Bringmann-Bachraoui's
       conjecture implies two conjectures due to Andrews 
       and Bachraoui \cite{Andrews-Bachraoui-1}.
       A natural problem
     is to extend
     the congruences in
     this paper  to modulo $16$,
     $32$, etc.
     In addition, it would be
     interesting
     to determine the
     arithmetic density of   the
     set of
     integers such that
     $c(n)\equiv 0 \pmod
     {2^k}$ for some  fixed positive
     integers $k$.

\end{document}